\documentclass{amsart}

\usepackage{amsfonts}
\usepackage{ifthen}
\usepackage{amsthm}
\usepackage{amsmath}
\usepackage{graphicx}
\usepackage{amscd,amssymb,amsthm}
\usepackage{graphicx}

\newtheorem{THEO}{\bf Theorem}

\newcommand{\lp}{\mathcal{LP}}
\newcommand{\lpi}{\mathcal{LP}I}
\newcommand{\g}{\gamma}

\newcounter{minutes}\setcounter{minutes}{\time}
\divide\time by 60
\newcounter{hours}\setcounter{hours}{\time}
\multiply\time by 60 \addtocounter{minutes}{-\time}

\setlength{\paperwidth}{210mm} \setlength{\paperheight}{297mm}
\setlength{\oddsidemargin}{0mm} \setlength{\evensidemargin}{0mm}
\setlength{\topmargin}{-20mm} \setlength{\headheight}{10mm}
\setlength{\headsep}{13mm} \setlength{\textwidth}{160mm}
\setlength{\textheight}{240mm} \setlength{\footskip}{0mm}
\setlength{\marginparwidth}{0mm} \setlength{\marginparsep}{0mm}

\newtheorem{lemma}{Lemma}
\newtheorem{theorem}{Theorem}

\newcommand{\real}{\operatorname{Re}}

\keywords{Lommel functions of the first kind; Struve functions; univalent, starlike functions; radius of starlikeness; zeros of Lommel functions of the first kind; zeros of Struve functions; trigonometric integrals.}
\subjclass[2010]{30C45, 30C15}

\begin{document}
\title[Radii of starlikeness of some special functions]{Radii of
starlikeness of some special functions}

\author[\'A. Baricz]{\'Arp\'ad Baricz}
\address{Department of Economics, Babe\c{s}-Bolyai University, Cluj-Napoca
400591, Romania} \email{bariczocsi@yahoo.com}

\author[D. K. Dimitrov]{Dimitar K. Dimitrov}
\address{Departamento de Matem\'atica Aplicada, IBILCE, Universidade Estadual Paulista UNESP, S\~{a}o Jos\'e do Rio Preto 15054, Brazil}
\email{dimitrov@ibilce.unesp.br}

\author[H. Orhan]{Halit Orhan}
\address{Department of Mathematics, Ataturk University, Erzurum 25240, Turkey}
\email{horhan@atauni.edu.tr}

\author[N. Yagmur]{Nihat Yagmur}
\address{Department of Mathematics, Erzincan University, Erzincan 24000,
Turkey} \email{nhtyagmur@gmail.com}

\thanks{The research of \'A. Baricz is supported by the Romanian National Authority for Scientific Research, CNCS-UEFISCDI, under Grant
PN-II-RU-TE-2012-3-0190. The research of D. K. Dimitrov is supported by the Brazilian foundations CNPq under Grant 307183/2013--0
and FAPESP under Grants 2009/13832--9}


\begin{abstract}
Geometric properties of the classical Lommel and Struve functions, both of the first kind, are studied.
For each of them, there different normalizations are applied in such a way that
the resulting functions are analytic in the unit disc of the complex plane.
For each of the six functions we determine the radius of starlikeness precisely.
\end{abstract}

\maketitle



\section{Introduction and statement of the main results}

Let $\mathbb{D}_{r}$ be the open disk $\left\{ {z\in \mathbb{C}:\left\vert
z\right\vert <r}\right\} ,$ where $r>0,$ and set $\mathbb{D}=\mathbb{D}_{1}$.
By $\mathcal{A}$ we mean the class of analytic
functions $f:\mathbb{D}_r\to\mathbb{C}$ which satisfy the usual
normalization conditions $f(0)=f'(0)-1=0.$ Denote by $\mathcal{S}$
the class of functions belonging to $\mathcal{A}$ which are univalent in $\mathbb{D}%
_r$ and let $\mathcal{S}^{\ast }(\alpha )$ be the subclass of $\mathcal{S}$
consisting of functions which are starlike of order $\alpha $ in $\mathbb{D}%
_r,$ where $0\leq \alpha <1.$ The analytic characterization of this class of
functions is
\begin{equation*}
\mathcal{S}^{\ast }(\alpha )=\left\{ f\in \mathcal{S}\ :\  \Re \left(\frac{zf'(z)}{f(z)}\right)>\alpha\ \ \mathrm{for\ all}\ \ z\in \mathbb{%
D}_r \right\},
\end{equation*}
and we adopt the convention $\mathcal{S}^{\ast}=\mathcal{S}^{\ast }(0)$. The real number
\begin{equation*}
r_{\alpha }^{\ast}(f)=\sup \left\{ r>0\ :\ \Re \left(\frac{zf'(z)}{f(z)}\right)>\alpha\ \ \mathrm{ for\ all}\ \ z\in \mathbb{D}_{r}\right\},
\end{equation*}
is called the radius of starlikeness of order $\alpha $ of the function $f.$
Note that $r^{\ast }(f)=r_0^{\ast}(f)$ is the largest radius such that the
image region $f(\mathbb{D}_{r^{\ast }(f)})$ is a starlike domain with
respect to the origin.

We consider two classical special functions, the Lommel function of the first kind $s_{\mu ,\nu }$
and the Struve function of the first kind $\mathbf{H}_{\nu}$. They are explicitly defined in terms of the
hypergeometric function $\,_{1}F_{2}$ by
\begin{equation}
s_{\mu ,\nu }(z)=\frac{z^{\mu +1}}{(\mu -\nu +1)(\mu +\nu +1)}\, _{1}F_{2}\left( 1;\frac{\mu -\nu +3}{2},\frac{\mu +\nu +3}{2};-\frac{z^{2}}{4}\right),\ \ \frac{1}{2}(-\mu \pm \nu-3) \not\in \mathbb{N},
\label{LomHypG}
\end{equation}%
and
\begin{equation}
\mathbf{H}_{\nu}(z)=\frac{\left(\frac{z}{2}\right)^{\nu+1}}{\sqrt{\frac{\pi}{4}}\, \Gamma\left(\nu+\frac{3}{2}\right)}
\,_{1}F_{2} \left( 1;\frac{3}{2},\nu + \frac{3}{2};-\frac{z^{2}}{4}\right),\ \ -\nu-\frac{3}{2} \not\in \mathbb{N}.
\label{SrtHypG}
\end{equation}%
A common feature of these functions is that they are solutions of inhomogeneous Bessel
differential equations \cite{Wat}. Indeed, the Lommel function of the first kind $s_{\mu ,\nu }$ is
a solution of
\begin{equation*}
z^{2}w''(z)+zw'(z)+(z^{2}-{\nu }^{2})w(z)=z^{\mu +1}
\end{equation*}
while the Struve function $\mathbf{H}_{\nu}$ obeys
\begin{equation*}
z^{2}w''(z)+zw'(z)+(z^{2}-{\nu }^{2})w(z)=\frac{4\left(
\frac{z}{2}\right) ^{\nu +1}}{\sqrt{\pi }\Gamma \left( \nu +\frac{1}{2}%
\right) }.
\end{equation*}%
We refer to Watson's treatise \cite{Wat} for comprehensive information about these functions and recall
some more recent contributions. In 1972 Steinig \cite{stein}
examined the sign of $s_{\mu ,\nu }(z)$ for real $\mu ,\nu $ and positive $z$. He showed, among other things,
that for $\mu <\frac{1}{2}$ the function $s_{\mu ,\nu }$ has infinitely many changes of sign on $(0,\infty )$. In 2012
Koumandos and Lamprecht \cite{Kou} obtained sharp estimates for the location of the zeros of $s_{\mu -\frac{1}{2},\frac{1}{2}}$
when $\mu \in (0,1)$. The Tur\'{a}n type inequalities for $s_{\mu -\frac{1}{2},\frac{1}{2}}$  were established in \cite{Bar2} while
those for the Struve function were proved in \cite{BPS}.

Geometric properties of $s_{\mu -\frac{1}{2},\frac{1}{2}}$ and of the Struve function were obtained in \cite{Bar3} and in \cite{H-Ny,N-H},
respectively. Motivated by those results we study the problem of starlikeness of certain analytic functions related to the classical special
functions under discussion. Since neither $s_{\mu ,\nu }$, nor $\mathbf{H}_{\nu}$ belongs to $\mathcal{A}$, first we perform some natural
normalizations. We define three functions originating from $s_{\mu ,\nu }$:
\begin{equation*}
f_{\mu ,\nu }(z)=\left( (\mu -\nu +1)(\mu +\nu +1)s_{\mu ,\nu }(z)\right)^{\frac{1}{\mu +1}},
\end{equation*}%
\begin{equation*}
g_{\mu ,\nu }(z)=(\mu -\nu +1)(\mu +\nu +1)z^{-\mu }s_{\mu ,\nu }(z)
\end{equation*}
and
\begin{equation*}
h_{\mu ,\nu }(z)=(\mu -\nu +1)(\mu +\nu +1)z^{\frac{1-\mu }{2}}s_{\mu ,\nu }(%
\sqrt{z}).
\end{equation*}%
Similarly, we associate with $\mathbf{H}_{\nu}$ the functions
$$
u_{\nu }(z)=\left(\sqrt{\pi }2^{\nu }\Gamma \left( \nu +\frac{3}{2} \right)
\mathbf{H}_{\nu }(z)\right)^{\frac{1}{\nu +1}},$$
$$
v_{\nu }(z)=\sqrt{\pi }2^{\nu }z^{-\nu }\Gamma \left( \nu + \frac{3}{2} \right) \mathbf{H}_{\nu }(z)
$$
and
$$
w_{\nu }(z)=\sqrt{\pi }2^{\nu }z^{\frac{1-\nu }{2}}\Gamma \left( \nu +\frac{3}{2}\right) \mathbf{H}_{\nu }(\sqrt{z}).
$$

Clearly the functions $f_{\mu ,\nu }$, $g_{\mu ,\nu }$, $h_{\mu ,\nu }$, $u_{\nu }$, $v_{\nu }$ and $w_{\nu }$
belong to the class $\mathcal{A}$. The main results in the present note concern the exact values of the radii
of starlikeness for these six function, for some ranges of the parameters.

Let us set
$$
f_{\mu }(z)=f_{\mu -\frac{1}{2},\frac{1}{2}}(z),\ \  g_{\mu }(z)=g_{\mu-\frac{1}{2},\frac{1}{2}}(z)\ \ \ \mbox{and}\ \ \ h_{\mu }(z)=h_{\mu-\frac{1}{2},\frac{1}{2}}(z).$$
The first principal result we establish reads as follows:

\begin{theorem}
\label{theo1} Let $\mu\in(-1,1),$ $\mu\neq0.$ The following statements hold:

\begin{enumerate}
\item[\textbf{a)}] If $0\leq\alpha<1$ and $\mu\in \left(-\frac{1}{2},0\right),$ then $r_{\alpha }^{\ast }(f_{\mu })=x_{\mu
,\alpha }$, where $x_{\mu ,\alpha }$ is the smallest positive root of the
equation
\begin{equation*}
z\, s_{\mu -\frac{1}{2},\frac{1}{2}}'(z)-\alpha \left(\mu + \frac{1}{2} \right)s_{\mu -\frac{1}{2},\frac{1}{2}}(z)=0.
\end{equation*}
Moreover, if $0\leq\alpha<1$ and $\mu\in \left(-1,-\frac{1}{2}\right),$ then $r_{\alpha
}^{\ast }(f_{\mu })=q_{\mu ,\alpha }$, where $q_{\mu ,\alpha }$ is
the unique positive root of the equation $$izs_{\mu -\frac{1}{2},\frac{1}{2}%
}'(iz)-\alpha \left(\mu +\frac{1}{2}\right)s_{\mu -\frac{1}{2},\frac{1}{2}%
}(iz)=0.$$

\item[\textbf{b)}] If $0\leq\alpha<1,$ then $r_{\alpha }^{\ast }(g_{\mu })=y_{\mu
,\alpha }$, where $y_{\mu ,\alpha }$ is the smallest positive root of the
equation
\begin{equation*}
z\, s_{\mu -\frac{1}{2},\frac{1}{2}}'(z)-\left(\mu +\alpha- \frac{1}{2} \right) s_{\mu -\frac{1}{2},\frac{1}{2}}(z)=0.
\end{equation*}

\item[\textbf{c)}] If $0\leq\alpha<1,$ then $r_{\alpha }^{\ast }(h_{\mu })=t_{\mu ,\alpha }$, where $%
t_{\mu ,\alpha }$ is the smallest positive root of the equation
\begin{equation*}
zs_{\mu -\frac{1}{2},\frac{1}{2}}'(z)-\left(\mu +2\alpha -\frac{3}{2%
}\right)s_{\mu -\frac{1}{2},\frac{1}{2}}(z)=0.
\end{equation*}
\end{enumerate}
\end{theorem}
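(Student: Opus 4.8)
The plan is to reduce all three radii to a single object, the logarithmic derivative $z\mapsto z s_{\mu-\frac12,\frac12}'(z)/s_{\mu-\frac12,\frac12}(z)$, and then to control its real part on circles by its values on a ray. Writing $\phi_\mu(z)={}_{1}F_{2}\!\left(1;\frac\mu2+1,\frac{\mu+3}2;-\frac{z^2}4\right)$, the representation \eqref{LomHypG} gives $s_{\mu-\frac12,\frac12}(z)=z^{\mu+\frac12}\phi_\mu(z)/(\mu(\mu+1))$, whence $f_\mu(z)=z\,\phi_\mu(z)^{1/(\mu+\frac12)}$, $g_\mu(z)=z\,\phi_\mu(z)$ and $h_\mu(z)=z\,\phi_\mu(\sqrt z)$. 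Logarithmic differentiation then yields
\[
\frac{zf_\mu'(z)}{f_\mu(z)}=1+\frac{1}{\mu+\frac12}\,\frac{z\phi_\mu'(z)}{\phi_\mu(z)},\qquad
\frac{zg_\mu'(z)}{g_\mu(z)}=1+\frac{z\phi_\mu'(z)}{\phi_\mu(z)},
\]
together with $zh_\mu'(z)/h_\mu(z)=1+\tfrac12\,w\phi_\mu'(w)/\phi_\mu(w)$ evaluated at $w=\sqrt z$. In each case the condition $\Re(zF'/F)>\alpha$ becomes, after clearing the elementary factor, a one-sided bound on $\Re\big(zs_{\mu-\frac12,\frac12}'(z)/s_{\mu-\frac12,\frac12}(z)\big)$ with threshold $\alpha(\mu+\frac12)$, $\mu+\alpha-\frac12$ and $\mu+2\alpha-\frac32$, respectively, matching the three displayed equations.

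The structural input I would rely on is a Hadamard factorization of $\phi_\mu$ over its zeros. Regarded as a function of $z^2$, $\phi_\mu$ is entire of order $\tfrac12$, so no exponential factor is needed; the key fact one must establish is that \emph{all} its zeros are real and simple, for then
\[
\phi_\mu(z)=\prod_{n\ge1}\Big(1-\frac{z^2}{\zeta_{\mu,n}^2}\Big),\qquad
\frac{z\phi_\mu'(z)}{\phi_\mu(z)}=-\sum_{n\ge1}\frac{2z^2}{\zeta_{\mu,n}^2-z^2},
\]
with $0<\zeta_{\mu,1}<\zeta_{\mu,2}<\cdots$, the Mittag--Leffler expansion being valid on $\mathbb D_{\zeta_{\mu,1}}$. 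Establishing reality of every zero is the step I expect to be the main obstacle: Steinig's sign-change count \cite{stein} only furnishes infinitely many real zeros, so ruling out non-real ones requires more, e.g. membership of $\phi_\mu$ in the Laguerre--P\'olya class via a suitable trigonometric-integral representation of $s_{\mu-\frac12,\frac12}$ in the spirit of Koumandos and Lamprecht \cite{Kou}.

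Granting the factorization, the analytic heart is an elementary bound: for real $a>0$ and $|u|=\rho<a$, the quantity $\Re\frac{u}{a-u}$ attains its maximum $\frac{\rho}{a-\rho}$ at $u=\rho$ and its minimum $\frac{-\rho}{a+\rho}$ at $u=-\rho$. Applying this termwise with $u=z^2$ and $a=\zeta_{\mu,n}^2$ shows that on each circle $|z|=r<\zeta_{\mu,1}$ the extrema of $\Re(z\phi_\mu'/\phi_\mu)$ occur on the axes. Hence for $g_\mu$, and for $f_\mu$ when $\mu>-\frac12$, the minimum of $\Re(zF'/F)$ over $\mathbb D_r$ is attained at $z=r$, and the radius is the smallest positive root of the corresponding real equation. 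When $\mu<-\frac12$ the factor $1/(\mu+\frac12)$ reverses the inequality, so for $f_\mu$ with $\mu\in(-1,-\frac12)$ the extremum migrates to the imaginary axis $z=\pm ir$; evaluating at $z=ir$ produces exactly $izs_{\mu-\frac12,\frac12}'(iz)-\alpha(\mu+\frac12)s_{\mu-\frac12,\frac12}(iz)=0$. For $h_\mu$ the substitution $w=\sqrt z$ reduces the computation to the same real-ray analysis and yields the equation in part \textbf{c)}.

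Finally I would settle monotonicity and sharpness on the ray. Each real map $r\mapsto rF'(r)/F(r)$ is strictly decreasing from $1$ towards $-\infty$ as $r\uparrow\zeta_{\mu,1}$ (in the imaginary-axis case the map $r\mapsto$ the value at $ir$ is strictly decreasing on all of $(0,\infty)$ and has no pole, giving a \emph{unique} positive root). Therefore the threshold value is crossed exactly once, the crossing point being the claimed radius; beyond it $\Re(zF'/F)$ drops below $\alpha$ at the extremal point on the relevant axis, so the radius cannot be larger. Combining the three threshold computations with these monotonicity facts gives parts \textbf{a)}, \textbf{b)} and \textbf{c)}.
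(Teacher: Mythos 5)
Your proposal is correct and follows essentially the same route as the paper: the same reduction to the logarithmic derivative of $\varphi_0$, the same Hadamard/Mittag--Leffler expansion over the real zeros, the same extremal bounds $\Re\frac{u}{a-u}\le\frac{|u|}{a-|u|}$ and $\Re\frac{u}{a-u}\ge\frac{-|u|}{a+|u|}$ pushing the minimum of $\Re(zF'(z)/F(z))$ onto the real axis (resp.\ onto the imaginary axis when the factor $1/(\mu+\frac12)$ is negative), and the same identification of the reality of all zeros of $\varphi_0$ as the essential input, obtained exactly as you suggest via P\'olya's theorem on trigonometric transforms of monotone kernels (note that for $\mu\in(-1,0)$ the sine-kernel representation fails and the paper passes to the cosine-kernel companion $\varphi_1$ with shifted parameter). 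The one point where you genuinely diverge is in locating and counting the critical roots: you observe directly from the expansion that $r\mapsto rF'(r)/F(r)$ decreases strictly from $1$ to $-\infty$ on $(0,\xi_{\mu,1})$ (and that the imaginary-axis analogue decreases on all of $(0,\infty)$), which at once places the smallest positive root of each characteristic equation inside the domain of validity and gives uniqueness; the paper instead proves the stronger Theorem~\ref{ThZ} (via Jensen polynomials, Obrechkoff's lemma and the Laguerre--P\'olya class) that $zs_{\mu-\frac12,\frac12}'(z)-c\,s_{\mu-\frac12,\frac12}(z)$ has only real zeros whose first positive one precedes $\xi_{\mu,1}$, and invokes Lemma~\ref{lempower} for uniqueness in the imaginary-axis case. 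Your shortcut suffices for the radius statements and is simpler, though it establishes less about the zero set of the linear combination.
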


The corresponding result about the radii of starlikeness of the functions, related to Struve's one,
is:

\begin{theorem}
\label{theo2} Let $|\nu|<\frac{1}{2}.$ The following assertions are true:

\begin{enumerate}
\item[\textbf{a)}] If $0\leq
\alpha <1,$ then $r_{\alpha }^{\ast }(u_{\nu })=\delta _{\nu ,\alpha }$,
where $\delta _{\nu ,\alpha }$ is the smallest positive root of the equation
\begin{equation*}
z\mathbf{H}_{\nu }'(z)-\alpha (\nu +1)\mathbf{H}_{\nu }(z)=0.
\end{equation*}%

\item[\textbf{b)}] If $0\leq
\alpha <1,$ then $r^{\ast }(v_{\nu })=\rho _{\nu ,\alpha }$, where $\rho
_{\nu ,\alpha }$ is the smallest positive root of the equation
\begin{equation*}
z\mathbf{H}_{\nu }'(z)-(\alpha +\nu )\mathbf{H}_{\nu }(z)=0.
\end{equation*}%

\item[\textbf{c)}] If $0\leq\alpha<1,$ then $r_{\alpha }^{\ast }(w_{\nu })=\sigma _{\nu ,\alpha }$,
where $\sigma _{\nu ,\alpha }$ is the smallest positive root of the equation
\begin{equation*}
z\mathbf{H}_{\nu }'(z)-(2\alpha +\nu -1)\mathbf{H}_{\nu }(z)=0.
\end{equation*}%
\end{enumerate}
\end{theorem}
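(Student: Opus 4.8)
The plan is to derive a Weierstrassian factorization of $\mathbf{H}_\nu$ from its zeros and then reduce the computation of the minimum of $\Re\!\left(zf'(z)/f(z)\right)$ over each circle $|z|=r$ to its value on the positive real axis. First I would record the factorization. Because $|\nu|<\frac12$, all the zeros of $\mathbf{H}_\nu$ are real (see Watson \cite{Wat}); writing $\pm h_{\nu,n}$, $n\ge1$, for them and stripping off the prefactor proportional to $z^{\nu+1}$ in \eqref{SrtHypG}, the even entire function $\Phi_\nu(z)=\,_{1}F_{2}\!\left(1;\frac32,\nu+\frac32;-z^2/4\right)$ is of order one and, since $\sum_n h_{\nu,n}^{-2}<\infty$, admits the product $\Phi_\nu(z)=\prod_{n\ge1}\bigl(1-z^2/h_{\nu,n}^2\bigr)$. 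Substituting into the definitions collapses the normalizing constants and yields
\[
u_\nu(z)=z\,\Phi_\nu(z)^{1/(\nu+1)},\qquad v_\nu(z)=z\,\Phi_\nu(z),\qquad w_\nu(z)=z\,\Phi_\nu(\sqrt z).
\]
Taking logarithmic derivatives termwise, which is legitimate by local uniform convergence of the products, expresses $zf'(z)/f(z)$ for each $f\in\{u_\nu,v_\nu,w_\nu\}$ as $1$ minus a convergent sum whose generic terms are $\frac{1}{\nu+1}\cdot\frac{2z^2}{h_{\nu,n}^2-z^2}$, $\frac{2z^2}{h_{\nu,n}^2-z^2}$, and $\frac{z}{h_{\nu,n}^2-z}$, respectively.

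Next I would invoke the elementary inequalities that, for real $a>0$ and $|z|=r<a$,
\[
\Re\frac{z^2}{a^2-z^2}\le\frac{r^2}{a^2-r^2}\qquad\text{and}\qquad\Re\frac{z}{a-z}\le\frac{r}{a-r},
\]
each with equality exactly when $z=r$; both are proved by writing $z=re^{i\theta}$ and checking monotonicity in $\cos\theta$ of the resulting real parts. Since $\nu+1>0$, all the coefficients in the three sums are positive, so applying these inequalities termwise shows that on every circle $|z|=r$ contained in the disc bounded by the first zero, the minimum of $\Re\!\left(zf'(z)/f(z)\right)$ is attained at $z=r$ and equals the real-variable expression $rf'(r)/f(r)$.

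Finally I would analyze $r\mapsto rf'(r)/f(r)$ on $(0,h_{\nu,1})$ (on $(0,h_{\nu,1}^2)$ for $w_\nu$): every summand is increasing in $r$, so this function is strictly decreasing from the value $1$ at $r=0$ to $-\infty$ as $r$ tends to the first zero (respectively its square). Hence it meets each level $\alpha\in[0,1)$ at a unique point, which is the smallest positive root of $zf'(z)/f(z)=\alpha$ and coincides with the radius of starlikeness $r^{\ast}_{\alpha}(f)$, since for smaller $r$ the minimum over the circle exceeds $\alpha$ and at that point it equals $\alpha$. Rewriting $zf'(z)/f(z)=\alpha$ through the relations between the three functions and $\mathbf{H}_\nu$, namely $zu_\nu'/u_\nu=\frac{1}{\nu+1}\,z\mathbf{H}_\nu'/\mathbf{H}_\nu$, $zv_\nu'/v_\nu=z\mathbf{H}_\nu'/\mathbf{H}_\nu-\nu$, and $zw_\nu'/w_\nu=\frac{1-\nu}{2}+\frac12\,\sqrt z\,\mathbf{H}_\nu'(\sqrt z)/\mathbf{H}_\nu(\sqrt z)$, produces precisely the three stated equations (for $w_\nu$ after the substitution $z\mapsto z^2$).

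The main obstacle is the first step: securing that all zeros of $\mathbf{H}_\nu$ are real when $|\nu|<\frac12$ and that the associated product has the correct genus, so that termwise logarithmic differentiation is valid and no exponential factor intervenes. Once this factorization is in hand, the minimum-principle inequalities and the monotonicity argument are routine, and the translation into the $\mathbf{H}_\nu$-equations is a direct computation.
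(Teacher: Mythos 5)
Your proposal is correct and follows the same skeleton as the paper's proof: Hadamard factorization of the even entire part of $\mathbf{H}_{\nu}$ (the paper quotes this from \cite{BPS}, with $h_{\nu,n}>h_{\nu,1}>1$ from \cite{Bar3}), termwise application of the inequality $\Re\left(\frac{z}{\beta-z}\right)\leq\frac{|z|}{\beta-|z|}$ to the logarithmic derivatives of $u_{\nu}$, $v_{\nu}$, $w_{\nu}$, and the observation that the minimum of $\Re\left(zf'(z)/f(z)\right)$ on $|z|=r$ is attained at $z=r$. Where you genuinely diverge is the last step, locating the smallest positive root of $z\mathbf{H}_{\nu}'(z)-d\,\mathbf{H}_{\nu}(z)=0$ inside $(0,h_{\nu,1})$, which is needed for the circle-by-circle inequality to apply up to the critical radius. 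The paper gets this from its Theorem \ref{ThZ}: via the integral representation of $\mathbf{H}_{\nu}$, P\'olya's Theorem \ref{PTh}, Jensen polynomials and the Obrechkoff-type Lemma \ref{OLem}, it shows the combination has only real zeros and that its first positive zero precedes $h_{\nu,1}$. You instead note that $r\mapsto rf'(r)/f(r)=1-\sum_{n}\frac{2r^2}{h_{\nu,n}^2-r^2}$ (suitably weighted) decreases strictly from $1$ to $-\infty$ on $(0,h_{\nu,1})$, hence crosses each level $\alpha\in[0,1)$ exactly once there. Your route is more elementary and self-contained for this theorem; the paper's route proves the stronger fact that \emph{all} zeros of $z\mathbf{H}_{\nu}'(z)-d\,\mathbf{H}_{\nu}(z)$ are real (which you neither need nor claim) and reuses the same machinery for the Lommel case in Theorem \ref{theo1}. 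The one item you correctly flag as needing a citation --- reality of the zeros of $\mathbf{H}_{\nu}$ and genus of the product for $|\nu|<\frac{1}{2}$ --- is exactly what \cite{BPS} (or the P\'olya argument inside Theorem \ref{ThZ}) supplies, so there is no gap.
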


It is worth mentioning that the starlikeness of $h_{\mu }$, when $\mu \in (-1,1)$,  $\mu
\neq 0,$ as well as of $w_{\nu }$, under the restriction $\left\vert \nu \right\vert
\leq \frac{1}{2}$, were established in \cite{Bar3}, and it was proved there that all the derivatives of these
functions are close-to-convex in $\mathbb{D}.$

\section{Preliminaries}
\setcounter{equation}{0}

\subsection{The Hadamard's factorization} The following preliminary result is the content of Lemmas 1 and 2 in \cite{Bar2}.

\begin{lemma}
\label{lem1} Let
\begin{equation*}
\varphi _{k}(z)=\, _{1}F_{2}\left( 1;\frac{\mu -k+2}{2},\frac{\mu -k+3}{2};-%
\frac{z^{2}}{4}\right)
\end{equation*}%
where ${z\in \mathbb{C}}$, ${\mu \in \mathbb{R}}$ and ${k\in }\left\{
0,1,\dots\right\} $ such that ${\mu -k}$ is not in $\left\{
0,-1,\dots\right\} $. Then, $\varphi _{k}$ is an entire function of order $%
\rho =1$ and of exponential type $\tau =1.$ Consequently, the Hadamard's
factorization of $\varphi _{k}$ is of the form%
\begin{equation}
\varphi _{k}(z)=\prod\limits_{n\geq 1}\left( 1-\frac{z^{2}}{z_{\mu ,k,n}^{2}}%
\right) ,  \label{1.6}
\end{equation}%
where $\pm z_{\mu ,k,1},$ $\pm z_{\mu ,k,2},\dots$ are all zeros of the
function $\varphi _{k}$ and the infinite product is absolutely convergent.
Moreover, for $z,$ ${\mu }$ and $k$ as above, we have
\begin{equation*}
(\mu -k+1)\varphi _{k+1}(z)=(\mu -k+1)\varphi _{k}(z)+z\varphi _{k}'(z),  \label{1.7}
\end{equation*}
\begin{equation*}
\sqrt{z}s_{\mu -k-\frac{1}{2},\frac{1}{2}}(z)=\frac{z^{\mu -k+1}}{(\mu
-k)(\mu -k+1)}\varphi _{k}(z).  \label{1.8}
\end{equation*}
\end{lemma}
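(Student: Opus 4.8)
The plan is to collapse $\varphi_k$ into a single power series in $z^2$, read off the order and type from the coefficient asymptotics, deduce the canonical product from Hadamard's theorem by exploiting the evenness of $\varphi_k$, and finally verify the two displayed identities by elementary series algebra.

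First I would simplify the series. Writing $c=\mu-k+2$, the two lower parameters of the ${}_1F_2$ are $c/2$ and $(c+1)/2$, so the Legendre duplication identity for the Pochhammer symbol, $(c/2)_n\,((c+1)/2)_n=4^{-n}(c)_{2n}$, cancels the factor $4^{-n}$ coming from $(-z^2/4)^n$ and yields
\begin{equation*}
\varphi_k(z)=\sum_{n\geq 0}\frac{(-1)^n}{(\mu-k+2)_{2n}}\,z^{2n}.
\end{equation*}
Hence $\varphi_k$ is even and entire, $\varphi_k(0)=1$, and its only nonvanishing Taylor coefficients are $c_{2n}=(-1)^n/(\mu-k+2)_{2n}$. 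Stirling's formula gives $\log(\mu-k+2)_{2n}=\log\Gamma(\mu-k+2+2n)-\log\Gamma(\mu-k+2)\sim 2n\log(2n)$, so $-\log|c_{2n}|\sim 2n\log(2n)$ and $|c_{2n}|^{1/(2n)}\sim e/(2n)$. Substituting these into the classical formulas
\begin{equation*}
\rho=\limsup_{m\to\infty}\frac{m\log m}{-\log|c_m|},\qquad (e\rho\,\tau)^{1/\rho}=\limsup_{m\to\infty} m^{1/\rho}|c_m|^{1/m},
\end{equation*}
in which only the indices $m=2n$ contribute, produces $\rho=1$ and $\tau=1$, establishing the first assertion.

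With order $1$ and finite type secured, I would invoke Hadamard's factorization theorem. The exponent of convergence of the zeros does not exceed the order, so $\sum_n|z_{\mu,k,n}|^{-2}<\infty$, the genus is at most one, and
\begin{equation*}
\varphi_k(z)=e^{az+b}\prod_{n}\left(1-\frac{z}{\zeta_n}\right)e^{z/\zeta_n},
\end{equation*}
with $\zeta_n$ ranging over the zeros, which are nonzero because $\varphi_k(0)=1$. That same normalization forces $e^{b}=1$, while evenness forces $a=0$ and arranges the zeros in symmetric pairs $\pm z_{\mu,k,n}$; pairing the primary factors gives $\left(1-z/z_{\mu,k,n}\right)\left(1+z/z_{\mu,k,n}\right)=1-z^2/z_{\mu,k,n}^2$ and cancels the exponentials, which is precisely \eqref{1.6}, with absolute convergence being exactly $\sum_n|z_{\mu,k,n}|^{-2}<\infty$.

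The two identities are then routine. For the first, passing from $k$ to $k+1$ replaces $c$ by $c-1=\mu-k+1$, and from $(c-1)_{2n}=\tfrac{c-1}{\,c-1+2n\,}(c)_{2n}$ one obtains $(\mu-k+1)\varphi_{k+1}(z)=\sum_n\tfrac{(-1)^n(\mu-k+1+2n)}{(\mu-k+2)_{2n}}z^{2n}$; splitting $\mu-k+1+2n$ into $\mu-k+1$ and $2n$ identifies the summands with $(\mu-k+1)\varphi_k(z)$ and $z\varphi_k'(z)$. For the second, substituting $\mu\mapsto\mu-k-\tfrac12$ and $\nu=\tfrac12$ in \eqref{LomHypG} gives lower parameters $\tfrac{\mu-k+2}{2},\tfrac{\mu-k+3}{2}$, prefactor $(\mu-\nu+1)(\mu+\nu+1)=(\mu-k)(\mu-k+1)$ and exponent $\mu-k+\tfrac12$, whence $s_{\mu-k-\frac12,\frac12}(z)=\tfrac{z^{\mu-k+1/2}}{(\mu-k)(\mu-k+1)}\varphi_k(z)$, and multiplying by $\sqrt z$ yields the claim. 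The main obstacle will be the rigorous extraction of $\rho=1$ and $\tau=1$, since one must control the subdominant Stirling terms and confirm that the vanishing odd-indexed coefficients do not disturb the relevant $\limsup$; once the growth is pinned down, the reduction of the Hadamard product via evenness is the conceptual core but otherwise standard.
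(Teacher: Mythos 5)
Your argument is correct, but be aware that the paper itself offers no proof of this lemma: it is imported wholesale as ``the content of Lemmas 1 and 2'' of \cite{Bar2}, so there is no in-text proof to compare against line by line; your self-contained derivation therefore supplies something the present paper deliberately omits. The substance checks out: the duplication identity $(c/2)_n((c+1)/2)_n=4^{-n}(c)_{2n}$ with $c=\mu-k+2$ correctly collapses $\varphi_k$ to $\sum_{n\geq 0}(-1)^n z^{2n}/(\mu-k+2)_{2n}$; the coefficient formulas for order and type are applied correctly, and the vanishing odd-indexed coefficients contribute $0$ to both limsups, so the caveat you raise at the end is harmless; and the two displayed identities follow from exactly the Pochhammer manipulations you perform (the parameter substitution $\mu\mapsto\mu-k-\tfrac12$, $\nu=\tfrac12$ in \eqref{LomHypG} does give lower parameters $\tfrac{\mu-k+2}{2}$, $\tfrac{\mu-k+3}{2}$ and prefactor $(\mu-k)(\mu-k+1)$). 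Two small points deserve an explicit sentence each in a finished write-up. First, for \eqref{1.6} to be an infinite product over ``all zeros'' you should note that $\varphi_k$ has infinitely many zeros: otherwise, after your normalizations $a=b=0$, it would be a polynomial, contradicting $\rho=1$. Second, the claim that evenness forces $a=0$ should be justified, for instance by evaluating the logarithmic derivative at the origin, where the genus-one product contributes nothing and evenness gives $\varphi_k'(0)=0$, hence $a=0$; the symmetric pairing of the zeros then cancels the exponential convergence factors exactly as you describe. With those two remarks added, the proof is complete and rigorous.
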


\subsection{Quotients of power series} We will also need the following result (see \cite{biernacki,pv}):

\begin{lemma}\label{lempower}
Consider the power series $f(x)=\displaystyle\sum_{n\geq 0}a_{n}x^n$ and $g(x)=\displaystyle\sum_{n\geq 0}b_{n}x^n$,
where $a_{n}\in \mathbb{R}$ and $b_{n}>0$ for all $n\geq 0$. Suppose that both series converge on $(-r,r)$, for some $r>0$. If the
sequence $\lbrace a_n/b_n\rbrace_{n\geq 0}$ is increasing (decreasing), then the function $x\mapsto{f(x)}/{g(x)}$ is increasing
(decreasing) too on $(0,r)$. The result remains true for the power series
$$f(x)=\displaystyle\sum_{n\geq 0}a_{n}x^{2n}\ \ \ \mbox{and}\ \ \ g(x)=\displaystyle\sum_{n\geq 0}b_{n}x^{2n}.$$
\end{lemma}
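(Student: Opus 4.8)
The plan is to reduce the asserted monotonicity of $h(x)=f(x)/g(x)$ to a sign condition on the derivative, and then to show that the relevant Wronskian-type expression has a power series whose coefficients all carry a single sign. Since $b_n>0$ and $x\in(0,r)$, the denominator satisfies $g(x)=\sum_{n\geq 0}b_n x^n\geq b_0>0$, so $h$ is well defined and differentiable on $(0,r)$ with
\[
h'(x)=\frac{f'(x)g(x)-f(x)g'(x)}{g(x)^2}.
\]
The sign of $h'$ is therefore governed entirely by the numerator $W(x)=f'(x)g(x)-f(x)g'(x)$, and the whole problem becomes: prove $W\geq 0$ on $(0,r)$ when $\{a_n/b_n\}$ is increasing, and $W\leq 0$ when it is decreasing.

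Next I would expand $W$ by multiplying out the two Cauchy products. Writing $f'(x)g(x)=\sum_{i\geq 1,\,j\geq 0} i\,a_i b_j x^{i+j-1}$ and $f(x)g'(x)=\sum_{i\geq 0,\,j\geq 1} j\,a_i b_j x^{i+j-1}$, the coefficient of $a_i b_j x^{i+j-1}$ in $W$ is $i-j$ for all $i,j\geq 0$. Collecting terms with the common exponent $n-1=i+j-1$ gives
\[
W(x)=\sum_{n\geq 1} D_n x^{n-1},\qquad D_n=\sum_{i=0}^{n}(2i-n)\,a_i b_{n-i}.
\]
Because $x^{n-1}>0$ for $x\in(0,r)$, it suffices to establish that $D_n\geq 0$ for every $n$ in the increasing case (and $D_n\leq 0$ in the decreasing case).

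The crux, and the step I expect to be the main obstacle, is pinning down the sign of $D_n$. The key is the symmetry observation that $\sum_{i=0}^{n}(2i-n)\,b_i b_{n-i}=0$, since the summand is antisymmetric under $i\mapsto n-i$; this provides a vanishing ``baseline'' into which the ratios can be inserted. Setting $t_i=a_i/b_i$, so that $a_i=t_i b_i$, and pairing the index $i$ with $n-i$, the combined contribution of each pair is
\[
b_i b_{n-i}\,(n-2i)\,(t_{n-i}-t_i),
\]
with the middle term vanishing when $n$ is even. For $i<n/2$ one has $n-2i>0$ and $n-i>i$, so monotonicity of $\{t_n\}$ forces $t_{n-i}-t_i\geq 0$; together with $b_i,b_{n-i}>0$ this makes every pair nonnegative, whence $D_n\geq 0$. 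The decreasing case follows verbatim with all inequalities reversed. Finally, for the even-power version I would substitute $y=x^2$: the hypotheses make $F(y)=\sum_{n\geq 0}a_n y^n$ and $G(y)=\sum_{n\geq 0}b_n y^n$ fall under the first part on $(0,r^2)$, so $F/G$ is monotone there, and since $x\mapsto x^2$ is increasing on $(0,r)$, the composition $x\mapsto f(x)/g(x)=F(x^2)/G(x^2)$ inherits the same monotonicity.
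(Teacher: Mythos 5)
Your proof is correct. Note that the paper does not prove this lemma at all --- it is quoted as a known result with a citation to Biernacki--Krzy\.z and Ponnusamy--Vuorinen --- and your argument (reduce to the sign of the Wronskian $f'g-fg'$, expand by Cauchy products, and show each coefficient $D_n=\sum_{i=0}^{n}(2i-n)a_ib_{n-i}$ has the right sign by pairing $i$ with $n-i$ after writing $a_i=t_ib_i$) is precisely the standard proof found in those references; the even-power case via $y=x^2$ is handled correctly as well. The only cosmetic remark is that your argument as written yields weak monotonicity of $f/g$ from weak monotonicity of $\{a_n/b_n\}$; if one wants strict monotonicity one adds the observation that some $D_n$ is strictly positive unless the ratio sequence is constant.
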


\subsection{Zeros of polynomials and entire functions and the Laguerre-P\'olya class} In this subsection
we provide the necessary information about polynomials and entire functions with real zeros. An algebraic polynomial
is called hyperbolic if all its zeros are real.

The simple statement that two real polynomials $p$ and $q$ posses real and interlacing zeros if and only if any linear combinations
of $p$ and $q$ is a hyperbolic polynomial is sometimes called Obrechkoff's theorem. We formulate the following specific statement
that we shall need.
\begin{lemma} \label{OLem}
Let $p(x)=1-a_1 x +a_2 x^2 -a_3 x^3 + \cdots +(-1)^n a_n x^n = (1-x/x_1)\cdots (1-x/x_n)$ be a hyperbolic polynomial with positive zeros
$0< x_1\leq x_2 \leq \cdots \leq x_n$, and normalized by $p(0)=1$. Then, for any constant $C$, the polynomial $q(x) = C p(x) - x\, p'(x)$ is hyperbolic. Moreover, the smallest
zero $\eta_1$   belongs to the interval $(0,x_1)$ if and only if $C<0$.
\end{lemma}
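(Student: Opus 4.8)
The plan is to split the statement into its two assertions and treat the hyperbolicity by the Obrechkoff criterion quoted above, while localizing the smallest zero by an elementary sign analysis.

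For the hyperbolicity, I would apply the criterion to the pair $p$ and $x\,p'(x)$, of which $q(x)=Cp(x)-x\,p'(x)$ is a linear combination. Since $p$ is hyperbolic of degree $n$, Rolle's theorem shows that $p'$ is hyperbolic of degree $n-1$ and that its zeros $\xi_1\le\cdots\le\xi_{n-1}$ interlace those of $p$, namely $x_i\le\xi_i\le x_{i+1}$. Appending the zero of the factor $x$, the zeros of $x\,p'$ are $0,\xi_1,\dots,\xi_{n-1}$, and because $0<x_1$ these satisfy $0\le x_1\le\xi_1\le x_2\le\cdots\le\xi_{n-1}\le x_n$. Thus $p$ and $x\,p'$ are two polynomials of degree $n$ whose real zeros interlace, and the Obrechkoff criterion guarantees that every linear combination of them, in particular $q$, is hyperbolic for each constant $C$.

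For the second assertion I would pass to the logarithmic derivative. Off the zeros of $p$ one has $q(x)=p(x)\bigl(C-R(x)\bigr)$, where $R(x)=x\,p'(x)/p(x)=\sum_{i=1}^n x/(x-x_i)$; note also $q(0)=Cp(0)=C$. On $(-\infty,0)$ every factor $x-x_i$ is negative, so $p'(x)=p(x)\sum_i 1/(x-x_i)<0$ while $p(x)>0$; hence $-x\,p'(x)<0$ there, and if $C<0$ then $q(x)<0$ for every $x\le 0$. On the interval $(0,x_1)$ we have $p>0$ and, since the smallest zero of $p'$ is at least $x_1$ and $p'(0)<0$, also $p'<0$; therefore $R$ is negative on $(0,x_1)$, continuous, and satisfies $R(0^+)=0$ and $R(x_1^-)=-\infty$, the latter whether or not $x_1$ is a multiple zero. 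If $C<0$, the intermediate value theorem applied to $R$ on $(0,x_1)$ produces a point $x^\ast$ with $R(x^\ast)=C$, that is $q(x^\ast)=0$; since $q$ does not vanish on $(-\infty,0]$, all real zeros are positive and the smallest one lies in $(0,x_1)$, i.e. $\eta_1\in(0,x_1)$. Conversely, if $C\ge 0$ then on $(0,x_1)$ both $Cp(x)\ge 0$ and $-x\,p'(x)>0$, so $q>0$ on $(0,x_1)$ and $q(0)=C\ge 0$; hence $q$ has no zero in the open interval $(0,x_1)$, and the smallest zero is either $0$ (when $C=0$) or is attained outside $[0,x_1)$. In every case $\eta_1\notin(0,x_1)$, which yields the asserted equivalence.

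The routine part is the hyperbolicity, which drops out of the quoted criterion once the interlacing of $p$ and $x\,p'$ is recorded. I expect the main obstacle to be the careful sign bookkeeping needed for the sharp \emph{if and only if}: one must verify that $q$ has no zeros on the nonpositive axis when $C<0$ and control the behaviour of $R$ at $x_1^-$ uniformly in the multiplicity of the zero $x_1$, so that the localization of $\eta_1$ remains valid even when $p$ has repeated roots.
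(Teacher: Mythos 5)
Your proof is correct and follows essentially the route the paper sketches: Rolle's theorem gives the interlacing of the zeros of $p$ and $x\,p'(x)$, hyperbolicity of the linear combination then follows from the Obrechkoff criterion (equivalently, from counting sign changes of $q$ at the zeros of $p$, which is what the paper's one-line proof suggests), and the localization of $\eta_1$ reduces to an elementary sign argument. Your use of the logarithmic derivative $R(x)=x\,p'(x)/p(x)$ together with the intermediate value theorem is a mild repackaging of that sign count --- one could equally compare $q(0)=C<0$ with the sign of $q=-x\,p'(x)+Cp(x)$ just to the left of $x_1$ --- but it is equivalent, and it has the merit of handling a multiple smallest root $x_1$ cleanly.
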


The proof is straightforward; it suffices to apply Rolle's theorem and then count the sign changes of the linear combination at the zeros of $p$. We refer to \cite{BDR, DMR} for further results on monotonicity and asymptotics of zeros of linear combinations of hyperbolic polynomials.

A real entire function $\psi$ belongs to the Laguerre-P\'{o}lya class $\lp$ if it can be represented in the form
$$
\psi(x) = c x^{m} e^{-a x^{2} + \beta x} \prod_{k\geq1}
\left(1+\frac{x}{x_{k}}\right) e^{-\frac{x}{x_{k}}},
$$
with $c,$ $\beta,$ $x_{k} \in \mathbb{R},$ $a \geq 0,$ $m\in
\mathbb{N} \cup\{0\},$ $\sum x_{k}^{-2} < \infty.$
Similarly, $\phi$  is  said to be of
type I in the Laguerre-P\'{o}lya class, written $\varphi \in \lpi$,
if $\phi(x)$ or $\phi(-x)$ can be represented as
$$
\phi(x) = c x^{m} e^{\sigma x} \prod_{k\geq1}\left(1+\frac{x}{x_{k}}\right),
$$
with $c \in \mathbb{R},$ $\sigma \geq 0,$ $m \in
\mathbb{N}\cup\{0\},$ $x_{k}>0,$ $\sum 1/x_{k} < \infty.$
The class $\lp$ is the complement of the space of hyperbolic
polynomials in the topology induced by the uniform convergence
on the compact sets of the complex plane while $\lpi$ is the complement
of the hyperbolic polynomials whose zeros posses a preassigned constant sign.
Given an entire function $\varphi$ with the Maclaurin expansion
$$\varphi(x) = \sum_{k\geq0}\gamma_{k} \frac{x^{k}}{k!},$$
its Jensen polynomials are defined by
$$
g_n(\varphi;x) = g_{n}(x) = \sum_{j=0}^{n} {n\choose j} \g_{j}
x^j.
$$
Jensen  proved the following relation in \cite{Jen12}:
\begin{THEO}\label{JTh}
The function $\varphi$ belongs to $\lp$ ($\lpi$, respectively) if and only if
all the polynomials $g_n(\varphi;x)$, $n=1,2,\ldots$, are hyperbolic (hyperbolic
with zeros of equal sign).
Moreover, the sequence $g_n(\varphi;z/n)$ converges locally
uniformly to $\varphi(z)$.
\end{THEO}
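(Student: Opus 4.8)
The plan is to derive all three assertions — the two equivalences and the convergence — from the single soft fact already recorded above, namely that $\lp$ (respectively $\lpi$) is precisely the closure, in the topology of locally uniform convergence, of the hyperbolic polynomials (respectively the hyperbolic polynomials whose zeros share a fixed sign). I would begin with the convergence statement, since it drives both implications. Writing
\[
g_n(\varphi;z/n)=\sum_{j=0}^{n}\frac{\g_j z^j}{j!}\prod_{i=0}^{j-1}\left(1-\frac{i}{n}\right),
\]
each coefficient tends to $\g_j/j!$ as $n\to\infty$, while the products lie in $[0,1]$. Since $\varphi$ is entire, the tail $\sum_{j>J}|\g_j|R^j/j!$ is uniformly small on $|z|\le R$, so a dominated--convergence (Tannery) argument yields $g_n(\varphi;z/n)\to\varphi(z)$ uniformly on compact sets.

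Next I would dispatch the implication that hyperbolicity of all the $g_n$ forces $\varphi\in\lp$. If $g_n(\varphi;x)$ has only real zeros, then the rescaled polynomial $z\mapsto g_n(\varphi;z/n)$ also has only real zeros, since these are merely multiplied by $n$. By the convergence just established, these hyperbolic polynomials tend locally uniformly to $\varphi$, whence $\varphi$ lies in the closure of the hyperbolic polynomials, i.e.\ $\varphi\in\lp$. When the zeros of every $g_n$ share a common sign, rescaling by the positive factor $n$ preserves the sign, and the same argument places $\varphi$ in $\lpi$.

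For the converse, that $\varphi\in\lp$ makes every $g_n$ hyperbolic, I would argue in two steps. First, fix $n$ and observe that $g_n(\varphi;\cdot)$ depends only on $\g_0,\dots,\g_n=\varphi^{(n)}(0)$, and these derivatives depend continuously on $\varphi$ in the locally uniform topology by the Weierstrass/Cauchy estimates; writing $\varphi=\lim_m p_m$ with $p_m$ hyperbolic then exhibits $g_n(\varphi;\cdot)$ as a coefficientwise limit of the polynomials $g_n(p_m;\cdot)$, all of degree at most $n$. As hyperbolic polynomials of bounded degree form a closed set, it suffices to treat the base case of a single hyperbolic polynomial $p$. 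Here, writing $p(x)=\sum_j c_j x^j$, one has $g_n(p;x)=\sum_j \frac{n!}{(n-j)!}\,c_j x^j$, so that $g_n(p;\cdot)$ arises from $p$ by the diagonal multiplier $\lambda_j=n!/(n-j)!$ in the monomial basis; because the symbol $\sum_j\lambda_j x^j/j!=(1+x)^n$ belongs to $\lpi$, the sequence $\{\lambda_j\}$ is a multiplier sequence and hence maps hyperbolic polynomials to hyperbolic polynomials. A self-contained alternative is to induct on the recurrence $g_n(\varphi;x)=g_{n-1}(\varphi;x)+x\,g_{n-1}(\varphi';x)$, using that $\lp$ is closed under differentiation. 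The sign bookkeeping for $\lpi$ is immediate, since $\lambda_j\ge0$ keeps the coefficients one-signed.

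I expect the polynomial base case to be the real obstacle: every other step is soft (limits, continuity of Taylor coefficients, closedness of bounded-degree real-rooted polynomials), whereas this is the only point where reality of the zeros must actually be created and propagated. In the inductive route the difficulty is sharper still, since hyperbolicity of the sum $g_{n-1}(\varphi;\cdot)+x\,g_{n-1}(\varphi';\cdot)$ does not follow from hyperbolicity of the two summands alone; one must carry along an interlacing invariant of Hermite--Kakeya/Obrechkoff type (cf.\ Lemma \ref{OLem}) guaranteeing that $g_{n-1}(\varphi;\cdot)$ and $g_{n-1}(\varphi';\cdot)$ form a real pencil, so that the displayed combination remains hyperbolic at each stage.
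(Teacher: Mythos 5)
The paper does not actually prove this statement: Theorem~\ref{JTh} is quoted as a classical result of Jensen with a citation to \cite{Jen12}, so there is no internal proof to compare against. Judged on its own terms, your reconstruction is essentially sound. The coefficient identity for $g_n(\varphi;z/n)$ is correct, the Tannery argument gives the locally uniform convergence, and the implication ``all $g_n$ hyperbolic $\Rightarrow \varphi\in\lp$'' (resp.\ $\lpi$) then follows from the closure description of these classes that the paper itself records, with the rescaling by $n>0$ preserving reality and sign of the zeros. The reduction of the converse to the polynomial base case via continuity of Taylor coefficients and closedness of the real-rooted polynomials of degree at most $n$ (Hurwitz) is also fine, and you correctly identify the base case as the only hard point.

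One caution there: invoking the full P\'olya--Schur characterization of multiplier sequences is uncomfortably close to circular, because the equivalence ``$\sum\lambda_j x^j/j!\in\lpi$ iff $\{\lambda_j\}$ is a multiplier sequence'' contains, as the equivalence of its two polynomial tests applied to $(1+x)^n$, precisely the $\lpi$ half of Jensen's theorem. What you actually need is only the finite, polynomial instance --- that the sequence $\lambda_j=n!/(n-j)!$, whose generating polynomial is $(1+x)^n$, maps hyperbolic polynomials to hyperbolic polynomials --- and this is the Malo--Schur--Szeg\H{o} composition theorem, which has an independent proof via Grace's apolarity theorem or the Hermite--Poulain theorem. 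You should cite or prove that specific fact rather than the general multiplier-sequence theorem. Your alternative inductive route through $g_n(\varphi;x)=g_{n-1}(\varphi;x)+x\,g_{n-1}(\varphi';x)$ is, as you say yourself, not a proof as it stands: hyperbolicity of the two summands does not give hyperbolicity of the combination, and the interlacing invariant needed to run the induction (in the spirit of Lemma~\ref{OLem}) is exactly the content you would have to establish; flagging it is honest but leaves that branch incomplete.
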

Further information about the Laguerre-P\'olya class can be found in
\cite{Obr, RS} while \cite{DC} contains references  and additional facts
about the Jensen polynomials in general and also about those related to the
Bessel function.

A special emphasis has been given on the question of characterizing the
kernels whose Fourier transform belongs to $\lp$ (see \cite{DR}). The following is a
typical result of this nature, due to P\'olya \cite{pol}.

\begin{THEO} \label{PTh}
\label{pol} Suppose that the function $K$ is positive, strictly increasing
and continuous on $[0, 1)$ and integrable there.
Then the entire functions
\begin{equation*}
U(z)=\int_{0}^{1}K(t) \sin (zt)dt\ \ \ \mbox{and} \ \ \
V(z)=\int_{0}^{1}K(t)\cos(zt)dt
\end{equation*}
have only real and simple zeros and their zeros interlace.
\end{THEO}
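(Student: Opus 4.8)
The plan is to package the two real functions into the single entire function
\[
\Phi(z)=V(z)+i\,U(z)=\int_{0}^{1}K(t)\,e^{izt}\,dt ,
\]
whose real and imaginary parts along the real axis are exactly $V$ and $U$, and to reduce the whole statement to a zero-localization for $\Phi$. Indeed, if all zeros of $\Phi$ lie in the open upper half-plane $\operatorname{Im}z>0$, then the Hermite--Biehler theorem for entire functions of exponential type applies: a function $p+iq$ with $p,q$ real on $\mathbb{R}$ and all its zeros in a single open half-plane necessarily has $p$ and $q$ with only real, simple zeros that interlace. Applying this to $\Phi$ (after the reflection $z\mapsto-z$, so as to match the lower half-plane representative $V-iU$) yields precisely the claim for $U$ and $V$. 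Thus everything is converted into one half-plane statement for $\Phi$.

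To localize the zeros I would discretize. Approximating the positive, strictly increasing kernel $K$ in $L^{1}[0,1)$ by positive, strictly increasing step functions $K_{n}$ on a uniform mesh $t_{j}=j/n$ turns $\Phi$ into $\Phi_{n}(z)=\int_{0}^{1}K_{n}(t)e^{izt}\,dt$, which under the substitution $w=e^{iz/n}$ becomes an elementary factor times a polynomial $\sum_{j}c_{j}w^{\,j-1}$ whose coefficients $c_{j}=K_{n}(t_{j})$ are positive and strictly increasing. By the Enestr\"om--Kakeya theorem the zeros of such a polynomial lie in the open unit disc $|w|<1$, and since $|w|=e^{-\operatorname{Im}(z)/n}$ this is exactly $\operatorname{Im}z>0$, the only remaining zeros of $\Phi_{n}$ being real; equivalently, the strict monotonicity makes every real linear combination of the underlying discrete sine and cosine sums hyperbolic, so Obrechkoff's theorem already forces their zeros to interlace. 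In particular each $\Phi_{n}$ is zero-free on the open lower half-plane, and since $|\Phi_{n}(z)-\Phi(z)|\le e^{|z|}\,\|K_{n}-K\|_{L^{1}}\to0$ locally uniformly, Hurwitz's theorem shows that $\Phi$ too is zero-free on the open lower half-plane (it is not identically zero, because $V(0)=\int_{0}^{1}K>0$).

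The main obstacle is to exclude real zeros of $\Phi$, thereby closing the gap between the closed and the open upper half-plane: this is precisely the step that makes the zeros of $U$ and $V$ simple and their interlacing strict, and it is where strict monotonicity of $K$ is indispensable. That the hypothesis cannot be weakened is shown by the borderline kernel $K\equiv1$, for which $V(z)=\sin z/z$ while $U(z)=(1-\cos z)/z$ has double zeros at the points $2k\pi$ and shares them with $V$ --- exactly the real, non-simple, non-interlacing behaviour that surfaces the instant the approximating polynomial zeros are allowed to reach $|w|=1$. The difficulty is genuine, since the successive coefficient ratios $K_{n}(t_{j+1})/K_{n}(t_{j})$ tend to $1$ as $n\to\infty$, so the Enestr\"om--Kakeya radius tends to $1$ and the approximating zeros may crowd toward the real axis; hence the real-axis exclusion cannot be read off from the approximation. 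I would instead prove it directly, in the form $\int_{0}^{1}K(t)e^{ixt}\,dt\neq0$ for every real $x$: the case $x=0$ is immediate, and for $x\neq0$ one integrates period by period and invokes the second mean value theorem for integrals, using the positivity and strict growth of $K$ to keep the contributions of successive periods from cancelling. Once $\Phi$ is known to vanish nowhere on the real axis, all its zeros lie in the open upper half-plane, and the Hermite--Biehler theorem completes the proof.
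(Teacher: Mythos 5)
The paper itself offers no proof of this statement: it is quoted as P\'olya's classical theorem with a pointer to \cite{pol}, whose original argument counts sign changes of the integrals over successive half-periods and then excludes non-real zeros by a zero-counting estimate. Your Enestr\"om--Kakeya plus Hermite--Biehler route is a legitimate, well-known alternative (essentially the one in the survey \cite{DR}), and the discretization, the $L^1$-estimate and the Hurwitz passage are all fine. There is, however, a genuine gap at the step you use to finish. The Hermite--Biehler theorem for entire functions is \emph{not} the statement that ``a function $p+iq$ of exponential type with all zeros in a single open half-plane has $p$ and $q$ with only real, simple, interlacing zeros.'' That is correct for polynomials, but for entire functions membership in the Hermite--Biehler class requires, besides the location of the zeros, the modulus condition $\left|\omega(z)\right|\le\left|\overline{\omega(\bar z)}\right|$ in the relevant half-plane, and the zero-free exponential factor in the Hadamard product can violate it. Concretely, $\omega(z)=(z-i)e^{-iz}$ has its only zero at $z=i$, yet its imaginary part on the real axis, $q(z)=-(z\sin z+\cos z)$, satisfies $q(iy)=y\sinh y-\cosh y$, which vanishes near $y\approx 1.2$; so $q$ has non-real zeros. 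For your $\Phi(z)=\int_0^1K(t)e^{izt}\,dt$ the missing condition is true, but it has to be proved --- for instance by Phragm\'en--Lindel\"of applied to $\Phi(z)/\Phi(-z)$ in the upper half-plane, using $|\Phi(x)|=|\Phi(-x)|$ on $\mathbb{R}$ and $\Phi(iy)\to0$, $|\Phi(-iy)|\to\infty$ --- and that argument itself already presupposes the non-vanishing of $\Phi$ on the real axis.

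That real-axis exclusion is the second soft spot: you rightly single it out as the main obstacle, but the proposal only gestures at ``second mean value theorem, period by period'' without carrying it out, and this is precisely where the strict monotonicity of $K$ is consumed. A cleaner way to close it: if $\Phi(x_0)=0$ for some real $x_0\neq0$, then $\int_0^1K(t)\cos(x_0t+\theta)\,dt=0$ for \emph{every} phase $\theta$, so it suffices to exhibit one $\theta$ for which the integral is provably nonzero; alternatively, for bounded $K$, integrating by parts against the positive Stieltjes measure $dK$ gives $ix_0\Phi(x_0)=K(1^-)e^{ix_0}-K(0)-\int_{(0,1)}e^{ix_0t}\,dK(t)$, and the strict triangle inequality $\bigl|K(0)+\int_{(0,1)}e^{ix_0t}\,dK(t)\bigr|<K(0)+\int_{(0,1)}dK(t)=K(1^-)$ (strict because $t\mapsto e^{ix_0t}$ cannot be constant on the support of $dK$, which is all of $[0,1]$ by strict monotonicity) shows $\Phi(x_0)\neq0$; the case of unbounded integrable $K$ needs an additional limiting argument. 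Until both of these points are supplied, the proof is a correct plan rather than a proof.
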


In other words, the latter result states that both the sine and the cosine transforms of a kernel
are in the Laguerre-P\'olya class provided the kernel is compactly supported and increasing in the support.

\begin{theorem}\label{ThZ} Let $\mu\in(-1,1),$ $\mu\neq0,$ and $c$ be a constant such that $c<{\mu}+\frac{1}{2}$.  Then the functions  $z\mapsto z s_{\mu -\frac{1}{2},\frac{1}{2}}'(z)-c s_{\mu -\frac{1}{2},\frac{1}{2}}(z)$
can be represented in the form
\begin{equation}
\mu (\mu+1) \left( z\, s_{\mu -\frac{1}{2},\frac{1}{2}}'(z)- c\, s_{\mu -\frac{1}{2},\frac{1}{2}}(z) \right) = z^{\mu+\frac{1}{2}} \psi_\mu(z),  \label{psi}
\end{equation}%
where $\psi_\mu$ is an even entire function and $ \psi_\mu \in \lp$. Moreover,  the smallest positive zero of $\psi_\mu$
does not exceed the first positive zero  of $s_{\mu-\frac{1}{2},\frac{1}{2}}$.

Similarly, if $|\nu |<\frac{1}{2}$ and $d$ is a constant satisfying $d<\nu+1$, then
\begin{equation}
{\frac{\sqrt{\pi}}{2}}\, \Gamma\left(\nu+\frac{3}{2}\right)\ \left(\, z \mathbf{H}_{\nu }'(z)- d \mathbf{H}_{\nu }(z) \, \right) = \left(\frac{z}{2}\right)^{\nu+1}\, \phi_\nu(z),
\label{phinu}
\end{equation}
where $\phi_\nu$ is an entire function in the Laguerre-P\'olya class and the smallest positive zero of $\phi_\nu$
does not exceed the first positive zero  of $\mathbf{H}_{\nu}$.
\end{theorem}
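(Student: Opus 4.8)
The plan is to strip off the monomial prefactors, reduce both assertions to a single structural statement about the operator $\varphi\mapsto C\varphi+z\varphi'$, and then invoke the three tools of Section~2 in turn. With $\varphi_0(z)={}_1F_2\!\left(1;\frac{\mu+2}{2},\frac{\mu+3}{2};-\frac{z^2}{4}\right)$, the second identity of Lemma~\ref{lem1} (at $k=0$) reads $s_{\mu-\frac12,\frac12}(z)=z^{\mu+\frac12}\varphi_0(z)/(\mu(\mu+1))$, and differentiating and collecting terms gives
\begin{equation*}
\mu(\mu+1)\left(z\,s_{\mu-\frac12,\frac12}'(z)-c\,s_{\mu-\frac12,\frac12}(z)\right)=z^{\mu+\frac12}\left[\left(\mu+\tfrac12-c\right)\varphi_0(z)+z\varphi_0'(z)\right].
\end{equation*}
Hence $\psi_\mu=C\varphi_0+z\varphi_0'$ with $C=\mu+\frac12-c>0$, and $\psi_\mu$ is even because $\varphi_0$ is. The identical computation for the Struve function, with $\Phi(z)={}_1F_2\!\left(1;\frac32,\nu+\frac32;-\frac{z^2}{4}\right)$, yields $\phi_\nu=D\Phi+z\Phi'$ with $D=\nu+1-d>0$. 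Everything thus reduces to two claims: (i) $\varphi_0,\Phi\in\lp$; and (ii) for $C>0$ the operator $\varphi\mapsto C\varphi+z\varphi'$ carries an even function of $\lp$ into $\lp$.

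For (i) I would write each hypergeometric factor as a trigonometric transform of a monotone kernel and apply P\'olya's Theorem~\ref{PTh}. Computing moments by the Beta integral gives, for $\mu\in(-1,0)$, the cosine representation $\varphi_0(z)=(\mu+1)\int_0^1(1-t)^\mu\cos(zt)\,dt$, whose kernel $(\mu+1)(1-t)^\mu$ is positive, integrable and strictly increasing on $[0,1)$; for $\mu\in(0,1)$ the same device gives the sine representation $z\varphi_0(z)=\mu(\mu+1)\int_0^1(1-t)^{\mu-1}\sin(zt)\,dt$, whose kernel is again positive, increasing and integrable. In both ranges Theorem~\ref{PTh} forces only real zeros, so $\varphi_0\in\lp$. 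For the Struve factor the representation $z\Phi(z)=(2\nu+1)\int_0^1(1-t^2)^{\nu-\frac12}\sin(zt)\,dt$ has, for $|\nu|<\frac12$, a positive strictly increasing integrable kernel, and Theorem~\ref{PTh} again gives $\Phi\in\lp$.

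For (ii) I would pass to Jensen polynomials. If $\varphi(z)=\sum_k\g_k z^k/k!$ then $C\varphi+z\varphi'=\sum_k(C+k)\g_k z^k/k!$, whose Jensen polynomial equals $C\,g_n(\varphi;x)+x\,g_n'(\varphi;x)$. Since $\varphi_0$ is even and lies in $\lp$, Theorem~\ref{JTh} makes each $g_n(\varphi_0;x)$ an even hyperbolic polynomial; writing $g_n(\varphi_0;x)=\widehat G(x^2)$, the polynomial $\widehat G$ has positive zeros and is normalized by $\widehat G(0)=1$, and in the variable $u=x^2$ one finds $C\,g_n+x\,g_n'=-2\bigl(\tilde C\widehat G(u)-u\widehat G'(u)\bigr)$ with $\tilde C=-C/2<0$. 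Lemma~\ref{OLem} then makes $\tilde C\widehat G-u\widehat G'$ hyperbolic, and a partial-fraction inspection of $u\widehat G'/\widehat G$ shows all of its zeros are positive; hence $C\,g_n+x\,g_n'$ is hyperbolic in $x$, and Theorem~\ref{JTh} yields $\psi_\mu\in\lp$. The argument for $\phi_\nu\in\lp$ is word for word the same.

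It remains to localize the smallest zero, which is the easy step: since $\varphi_0(0)=1$ and $\varphi_0$ stays positive up to its first positive zero $\zeta_1$, we have $\psi_\mu(0)=C>0$ while $\psi_\mu(\zeta_1)=\zeta_1\varphi_0'(\zeta_1)<0$ (the zero being simple), so the intermediate value theorem places a zero of $\psi_\mu$ in $(0,\zeta_1)$, and $\zeta_1$ is precisely the first positive zero of $s_{\mu-\frac12,\frac12}$; the Struve case is identical with $\Phi$, $D$ and $\mathbf{H}_\nu$. The main obstacle is step (i): the statement holds for all admissible parameters only because one can choose, separately for $\mu<0$ and $\mu>0$ (and for $|\nu|<\frac12$), a trigonometric representation whose kernel is genuinely increasing, which is exactly the hypothesis of Theorem~\ref{PTh}. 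Once that is in place, steps (ii) and the localization are routine applications of Lemma~\ref{OLem} and Theorem~\ref{JTh}.
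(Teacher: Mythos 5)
Your proposal is correct and follows essentially the same route as the paper's: integral representations with increasing kernels plus P\'olya's theorem (Theorem~\ref{PTh}) put the underlying hypergeometric factors $\varphi_0$ and $\mathcal{H}_\nu$ in $\lp$, and the hyperbolicity of the Jensen polynomials of $C\varphi+z\varphi'$ is then extracted from Lemma~\ref{OLem} and Theorem~\ref{JTh} exactly as in the paper (which merely pre-substitutes $\zeta=z^2/4$ to work in $\lpi$, and reaches your cosine representation for $\mu\in(-1,0)$ by the equivalent device of shifting $\mu$ and using $\varphi_1$). The only genuine deviation is the localization of the first zero, which you obtain by a direct sign-change/intermediate-value argument at the first zero of $\varphi_0$ rather than by carrying the interlacing of the Jensen-polynomial zeros to the limit as the paper does; both are valid, and your version is arguably the more self-contained of the two.
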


\begin{proof}
First suppose that $\mu\in(0,1).$ Since, by (\ref{LomHypG}),
$$
\mu (\mu+1) s_{\mu -\frac{1}{2},\frac{1}{2}}(z) = \sum_{k\geq0} \frac{(-1)^kz^{2k+\mu+\frac{1}{2}}}{2^{2k}\left(\frac{\mu+2}{2}\right)_k \left(\frac{\mu+3}{2}\right)_k},
$$
then
$$
\mu (\mu+1)  z\, s_{\mu -\frac{1}{2},\frac{1}{2}}'(z) = \sum_{k\geq0} \frac{(-1)^k\left(2k+\mu+\frac{1}{2}\right)z^{2k+\mu+\frac{1}{2}}}{2^{2k}\left(\frac{\mu+2}{2}\right)_k \left(\frac{\mu+3}{2}\right)_k}.
$$
Therefore, (\ref{psi}) holds with
$$
\psi_\mu(z) = \sum_{k\geq0} \frac{2k+{\mu}+\frac{1}{2} -c}{\left(\frac{\mu+2}{2}\right)_k \left(\frac{\mu+3}{2}\right)_k} \left( -\frac{z^2}{4} \right)^{k}.
$$
On the other hand, by Lemma 1,
$$
\mu (\mu+1) s_{\mu -\frac{1}{2},\frac{1}{2}}(z) = z^{\mu+\frac{1}{2}} \varphi_0(z),
$$
and, by \cite[Lemma 3]{Bar2},  we have
\begin{equation}
z\varphi _{0}(z)=\mu (\mu +1)\int_{0}^{1}(1-t)^{\mu -1}\sin (zt)dt, \ \ \mathrm{for}\ \mu >0. \label{integ}
\end{equation}%
Therefore $\varphi_0$ has the Maclaurin expansion
$$
\varphi_0(z) = \sum_{k\geq0}\frac{1}{\left(\frac{\mu+2}{2}\right)_k \left(\frac{\mu+3}{2}\right)_k}  \left( -\frac{z^2}{4} \right)^{k}.
$$
Moreover, (\ref{integ}) and Theorem \ref{PTh} imply that $\varphi_0 \in \lp$ for $\mu\in(0,1)$, so that
the function $\tilde{\varphi}_0(z):= \varphi_0(2\sqrt{z})$,
$$
\tilde{\varphi}_0(\zeta) = \sum_{k\geq0} \frac{1}{\left(\frac{\mu+2}{2}\right)_k \left(\frac{\mu+3}{2}\right)_k}  \left( -\zeta\right)^{k},
$$
belongs to $\lpi$.
Then it follows form Theorem \ref{JTh} that its Jensen polynomials
$$
g_n(\tilde{\varphi}_0;\zeta) = \sum_{k=0}^n  {n\choose k} \frac{k!}{\left(\frac{\mu+2}{2}\right)_k \left(\frac{\mu+3}{2}\right)_k}  \left( -\zeta \right)^{k}
$$
are all hyperbolic. However, observe that the Jensen polynomials of $\tilde{\psi}_\mu(z):= \psi_\mu(2\sqrt{z})$ are simply
$$
-\frac{1}{2}g_n(\tilde{\psi}_\mu;\zeta) =  -\frac{1}{2}\left({\mu}+\frac{1}{2}-c\right)\, g_n(\tilde{\varphi}_0;\zeta) - \zeta\, g_n'(\tilde{\varphi}_0;\zeta).
$$
Lemma \ref{OLem} implies that all zeros of $g_n(\tilde{\psi}_\mu;\zeta)$ are real and positive and that the smallest one
precedes the first zero of  $g_n(\tilde{\varphi}_0;\zeta)$. In view of Theorem \ref{JTh}, the latter conclusion immediately yields that $\tilde{\psi}_\mu \in \lpi$
and that its first zero precedes the one of  $\tilde{\varphi}_0$.  Finally, the first statement of the theorem for $\mu\in(0,1)$ follows after we go back
from  $\tilde{\psi}_\mu$ and $\tilde{\varphi}_0$ to $\psi_\mu$ and $\varphi_0$ by setting $\zeta=-\frac{z^2}{4}$.

Now we prove \eqref{psi} for the case when $\mu\in(-1,0)$. Observe that for $\mu\in(0,1)$ the function \cite[Lemma 3]{Bar2} $$\varphi_1(z)=\sum_{k\geq0}\frac{1}{\left(\frac{\mu+1}{2}\right)_k \left(\frac{\mu+2}{2}\right)_k}  \left( -\frac{z^2}{4} \right)^{k}=\mu \int_{0}^{1}(1-t)^{\mu -1}\cos (zt)dt$$
belongs also to Laguerre-P\'olya class $\mathcal{LP},$ and hence the Jensen polynomials of $\tilde{\varphi}_1(z):= \varphi_1(2\sqrt{z})$ are hyperbolic. Straightforward calculations show that the Jensen polynomials of $\tilde{\psi}_{\mu-1}(z):= \psi_{\mu-1}(2\sqrt{z})$ are
$$-\frac{1}{2}g_n(\tilde{\psi}_{\mu-1};\zeta) =  -\frac{1}{2}\left({\mu}-\frac{1}{2}-c\right)\, g_n(\tilde{\varphi}_1;\zeta) - \zeta\, g_n'(\tilde{\varphi}_1;\zeta).$$
Lemma \ref{OLem} implies that for $\mu\in(0,1)$ all zeros of $g_n(\tilde{\psi}_{\mu-1};\zeta)$ are real and positive and that the smallest one
precedes the first zero of  $g_n(\tilde{\varphi}_1;\zeta)$. This fact, together with Theorem \ref{JTh}, yields that $\tilde{\psi}_{\mu-1} \in \lpi$
and that its first zero precedes the one of  $\tilde{\varphi}_1$. Consequently, the first statement of the theorem for $\mu\in(-1,0)$ follows after we go back
from  $\tilde{\psi}_{\mu-1}$ and $\tilde{\varphi}_1$ to $\psi_{\mu-1}$ and $\varphi_1$ by setting $\zeta=-\frac{z^2}{4}$ and substituting $\mu$ by $\mu+1$.

In order to prove the corresponding statement for (\ref{phinu}), we recall first that the hypergeometric representation (\ref{SrtHypG}) of the Struve function
is equivalent to
$$
\frac{\sqrt{\pi}}{2}\, \Gamma\left(\nu+\frac{3}{2}\right)\,  \mathbf{H}_{\nu}(z) = \sum_{k\geq0} \frac{(-1)^k}{\left(\frac{3}{2}\right)_k \left(\nu+\frac{3}{2}\right)_k} \left( \frac{z}{2} \right)^{2k+\nu+1},
$$
which immediately yields
$$
  \phi_\nu(z) = \sum_{k\geq0} \frac{2k+\nu+1-d}{\left(\frac{3}{2}\right)_k \left(\nu+\frac{3}{2}\right)_k}\left( -\frac{z^2}{4} \right)^{k}.
$$
On the other hand, the integral representation
\begin{equation*}
\mathbf{H}_{\nu }(z)=\frac{2\left(\frac{z}{2}\right) ^{\nu }}{\sqrt{\pi }%
\Gamma \left( \nu +\frac{1}{2}\right) }\int_{0}^{1}(1-t^{2})^{\nu -\frac{1}{2%
}}\sin (zt)dt,
\end{equation*}%
which holds for $\nu >-\frac{1}{2},$ and Theorem \ref{PTh} imply that the even entire function
$$
\mathcal{H}_\nu(z) = \sum_{k\geq0} \frac{1}{\left(\frac{3}{2}\right)_k \left(\nu+\frac{3}{2}\right)_k} \left( -\frac{z^2}{4} \right)^{k}
$$
belongs to the Laguerre-P\'olya class when $|\nu|<\frac{1}{2}$. Then the functions $\tilde{\mathcal{H}}_\nu(z):= \mathcal{H}_{\nu}(2\sqrt{z})$,
$$
\tilde{\mathcal{H}}_\nu(\zeta) = \sum_{k\geq0} \frac{1}{\left(\frac{3}{2}\right)_k \left(\nu+\frac{3}{2}\right)_k} \left(-\zeta \right)^{k},
$$
is in $\lpi$. Therefore, its Jensen polynomials
$$
g_n(\tilde{\mathcal{H}}_\nu;\zeta) = \sum_{k=0}^n  {n\choose k} \frac{k!}{\left(\frac{3}{2}\right)_k \left(\nu+\frac{3}{2}\right)_k} \left( -\zeta \right)^{k}
$$
are hyperbolic, with positive zeros. Then, by Lemma \ref{OLem}, the polynomial
$
-\frac{1}{2}\left({\nu}+{1}-d\right)\, g_n(\tilde{\mathcal{H}}_\nu;\zeta) - \zeta\, g_n'(\tilde{\mathcal{H}}_\nu;\zeta)
$
possesses only real positive zeros. Obviously the latter polynomial coincides with the $n$th Jensen polynomials of
$\tilde{\phi}_\nu(z) = \phi_\nu(2\sqrt{z})$, that is
$$
-\frac{1}{2}g_n(\tilde{\phi}_\nu;\zeta) = -\frac{1}{2}\left({\nu}+{1}-d\right)\, g_n(\tilde{\mathcal{H}}_\nu;\zeta) - \zeta\, g_n'(\tilde{\mathcal{H}}_\nu;\zeta).
$$
Moreover, the smallest zero of $g_n(\tilde{\phi}_\nu;\zeta)$ precedes the first positive zero of   $g_n(\tilde{\mathcal{H}}_\nu;\zeta)$.
This implies that $\phi_\nu \in \lp$ and that its first positive zero is smaller that the one of  $\mathcal{H}_\nu$.
\end{proof}

\section{Proofs of the main results}
\setcounter{equation}{0}

\begin{proof}[Proof of Theorem \ref{theo1}]
We need to show that for the corresponding values of $\mu$ and $\alpha$ the
inequalities
\begin{equation}
\Re \left( \frac{zf_{\mu }'(z)}{f_{\mu }(z)}\right) >\alpha ,%
\text{ \ \ }\Re \left( \frac{zg_{\mu }'(z)}{g_{\mu }(z)}\right)
>\alpha \text{ \ and \ }\Re \left( \frac{zh_{\mu }'(z)}{h_{\mu
}(z)}\right) >\alpha \text{ \ \ }  \label{2.0}
\end{equation}%
are valid for $z\in \mathbb{D}_{r_{\alpha }^{\ast }(f_{\mu })}$, $z\in
\mathbb{D}_{r_{\alpha }^{\ast }(g_{\mu })}$ and $z\in \mathbb{D}_{r_{\alpha
}^{\ast }(h_{\mu })}$ respectively, and each of the above inequalities does
not hold in larger disks. It follows from (\ref{1.6}) that%
\begin{equation*}
f_{\mu }(z)=f_{\mu -\frac{1}{2},\frac{1}{2}}(z)=\left(\mu (\mu +1)s_{\mu -%
\frac{1}{2},\frac{1}{2}}(z)\right)^{\frac{1}{\mu +\frac{1}{2}}}=z\left(\varphi
_{0}(z)\right)^{\frac{1}{\mu +\frac{1}{2}}},
\end{equation*}%
\begin{equation*}
g_{\mu }(z)=g_{\mu -\frac{1}{2},\frac{1}{2}}(z)=\mu (\mu +1)z^{-\mu +\frac{1%
}{2}}s_{\mu -\frac{1}{2},\frac{1}{2}}(z)=z\varphi _{0}(z),
\end{equation*}%
\begin{equation*}
h_{\mu }(z)=h_{\mu -\frac{1}{2},\frac{1}{2}}(z)=\mu (\mu +1)z^{\frac{3-2\mu
}{4}}s_{\mu -\frac{1}{2},\frac{1}{2}}(\sqrt{z})=z\varphi _{0}(\sqrt{z}),
\end{equation*}%
which in turn imply that
\begin{equation*}
\frac{zf_{\mu }'(z)}{f_{\mu }(z)}=1+\frac{z\varphi _{0}'(z)%
}{(\mu +\frac{1}{2})\varphi _{0}(z)}=1-\frac{1}{\mu +\frac{1}{2}}%
\sum\limits_{n\geq 1}\frac{2z^{2}}{z_{\mu ,0,n}^{2}-z^{2}},
\end{equation*}%
\begin{equation*}
\frac{zg_{\mu }'(z)}{g_{\mu }(z)}=1+\frac{z\varphi _{0}'(z)%
}{\varphi _{0}(z)}=1-\sum\limits_{n\geq 1}\frac{2z^{2}}{z_{\mu
,0,n}^{2}-z^{2}},
\end{equation*}%
\begin{equation*}
\frac{zh_{\mu }'(z)}{h_{\mu }(z)}=1+\frac{1}{2}\frac{\sqrt{z}%
\varphi _{0}'(\sqrt{z})}{\varphi _{0}(\sqrt{z})}=1-\sum\limits_{n%
\geq 1}\frac{z}{z_{\mu ,0,n}^{2}-z},
\end{equation*}%
respectively. We note that for $\mu \in (0,1)$ the function $\varphi _{0}$
has only real and simple zeros (see \cite{Bar2}). For $\mu \in (0,1),$ and ${%
n\in }\left\{ 1,2,\dots \right\} $ let $\xi _{\mu ,n}=z_{\mu ,0,n}$ be the $%
n $th positive zero of $\varphi _{0}.$ We know that (see \cite[Lemma 2.1]%
{Kou}) $\xi _{\mu ,n}\in (n\pi ,(n+1)\pi )$ for all $\mu \in (0,1)$ and
${n\in }\left\{ 1,2,\dots \right\}$, which implies that $\xi _{\mu ,n}>\xi
_{\mu ,1}>\pi >1$ for all $\mu \in (0,1)$ and $n \geq 2$. On the other hand,
it is known that \cite{sz} if ${z\in \mathbb{C}}$ and $\beta $ ${\in \mathbb{R}}$ are such that $\beta >{\left\vert
z\right\vert }$, then%
\begin{equation}
\frac{{\left\vert z\right\vert }}{\beta -{\left\vert z\right\vert }}\geq
\Re\left( \frac{z}{\beta -z}\right) .  \label{2.5}
\end{equation}%
Then the inequality
$$
\frac{{\left\vert z\right\vert }^{2}}{\xi _{\mu ,n}^{2}-{\left\vert
z\right\vert }^{2}}\geq\Re\left( \frac{z^{2}}{\xi _{\mu ,n}^{2}-z^{2}}%
\right),
$$
holds get for every $\mu \in (0,1)$, $n\in \mathbb{N}$ and ${\left\vert z\right\vert <}\xi _{\mu ,1}$. Therefore,
\begin{equation*}
\Re\left(\frac{zf_{\mu }'(z)}{f_{\mu }(z)}\right)=1-\frac{1%
}{\mu +\frac{1}{2}}\Re\left(\sum\limits_{n\geq 1}\frac{2z^{2}}{\xi
_{\mu ,n}^{2}-z^{2}}\right) \geq 1-\frac{1}{\mu +\frac{1}{2}}%
\sum\limits_{n\geq 1}\frac{2\left\vert z\right\vert ^{2}}{\xi _{\mu
,n}^{2}-\left\vert z\right\vert ^{2}}=\frac{\left\vert z\right\vert f_{\mu
}'(\left\vert z\right\vert )}{f_{\mu }(\left\vert z\right\vert )},
\end{equation*}%
\begin{equation*}
\Re\left( \frac{zg_{\mu }'(z)}{g_{\mu }(z)}\right) =1-\Re\left(\sum\limits_{n\geq 1}\frac{2z^{2}}{\xi _{\mu ,n}^{2}-z^{2}}\right)
\geq 1-\sum\limits_{n\geq 1}\frac{2\left\vert z\right\vert ^{2}}{\xi _{\mu
,n}^{2}-\left\vert z\right\vert ^{2}}=\frac{\left\vert z\right\vert g_{\mu
}'(\left\vert z\right\vert )}{g_{\mu }(\left\vert z\right\vert )}
\end{equation*}%
and%
\begin{equation*}
\Re\left(\frac{zh_{\mu}'(z)}{h_{\mu }(z)}\right)=1-\Re\left(\sum\limits_{n\geq 1}\frac{z}{\xi _{\mu ,n}^{2}-z}\right) \geq
1-\sum\limits_{n\geq 1}\frac{\left\vert z\right\vert }{\xi _{\mu
,n}^{2}-\left\vert z\right\vert }=\frac{\left\vert z\right\vert h_{\mu
}'(\left\vert z\right\vert )}{h_{\mu }(\left\vert z\right\vert )},
\end{equation*}%
where equalities are attained only when $z=\left\vert z\right\vert =r$. The latter  inequalities and
the minimum principle for harmonic functions imply that the
corresponding inequalities in (\ref{2.0}) hold if and only if $%
\left\vert z\right\vert <x_{\mu ,\alpha },$ $\left\vert z\right\vert <y_{\mu
,\alpha }$ and $\left\vert z\right\vert <t_{\mu ,\alpha },$ respectively,
where $x_{\mu ,\alpha }$, $y_{\mu ,\alpha }$ and $t_{\mu ,\alpha }$ are the
smallest positive roots of the equations%
\begin{equation*}
rf_{\mu }'(r)/f_{\mu }(r)=\alpha ,\text{ \ }rg_{\mu }'(r)/g_{\mu }(r)=\alpha ,\ rh_{\mu }'(r)/h_{\mu }(r)=\alpha.
\end{equation*}%
Since their solutions coincide with the zeros of the functions
$$r\mapsto rs_{\mu -\frac{1}{2},\frac{1}{2}}'(r)-\alpha \left( \mu +\frac{1}{2}%
\right) s_{\mu -\frac{1}{2},\frac{1}{2}}(r),\ r\mapsto rs_{\mu -\frac{1}{2},\frac{1}{2}}'(r)-\left( \mu +\alpha -\frac{1}{2%
}\right) s_{\mu -\frac{1}{2},\frac{1}{2}}(r),
$$
$$
r\mapsto rs_{\mu -\frac{1}{2},\frac{1}{2}}'(r)-\left( \mu +2\alpha -\frac{3}{%
2}\right) s_{\mu -\frac{1}{2},\frac{1}{2}}(r),
$$
the result we need follows from Theorem \ref{ThZ}. In other words, Theorem \ref{ThZ} show that all the zeros of the above three functions are real and their first positive zeros do not exceed the first positive zero $\xi_{\mu,1}$. This guarantees that the above inequalities hold. This completes the proof our theorem when $\mu\in(0,1)$.

Now we prove that the inequalities in \eqref{2.0} also hold for $\mu\in\left(-1,0\right),$ except the first one, which is valid for $\mu\in\left(-\frac{1}{2},0\right).$ In order to do this, suppose that $\mu\in(0,1)$ and adapt the above proof, substituting $\mu$ by $\mu-1$,  $\varphi_0$ by the function $\varphi_1$ and taking into account that the $n$th positive zero of $\varphi _{1},$ denoted by $\zeta_{\mu ,n}=z_{\mu ,1,n},$ satisfies (see \cite{Bar3}) $\zeta _{\mu ,n}>\zeta _{\mu ,1}>\frac{\pi }{2}%
>1 $ for all $\mu \in (0,1)$ and $n\geq 2$. It is worth mentiontioning that
\begin{equation*}
\Re\left(\frac{zf_{\mu-1}'(z)}{f_{\mu-1}(z)}\right)=1-\frac{1%
}{\mu -\frac{1}{2}}\Re\left(\sum\limits_{n\geq 1}\frac{2z^{2}}{\zeta
_{\mu ,n}^{2}-z^{2}}\right) \geq 1-\frac{1}{\mu -\frac{1}{2}}%
\sum\limits_{n\geq 1}\frac{2\left\vert z\right\vert ^{2}}{\zeta_{\mu
,n}^{2}-\left\vert z\right\vert ^{2}}=\frac{\left\vert z\right\vert f_{\mu-1
}'(\left\vert z\right\vert )}{f_{\mu-1}(\left\vert z\right\vert )},
\end{equation*}%
remains true for $\mu\in\left(\frac{1}{2},1\right)$. In this case we use the minimum principle for harmonic functions to ensure that \eqref{2.0} is valid for $\mu-1$ instead of $\mu.$ Thus, using again Theorem \ref{ThZ} and replacing $\mu$ by $\mu+1$, we obtain the statement of the first part for $\mu\in\left(-\frac{1}{2},0\right)$. For $\mu\in(-1,0)$ the proof of the second and third inequalities in \eqref{2.0} go along similar lines.

To prove the statement for part {\bf a} when $\mu \in \left(-1,-\frac{1}{2}\right)$ we observe that the counterpart of (\ref{2.5}) is
\begin{equation}
\real\left( \frac{z}{\beta -z}\right) \geq \frac{-{\left\vert
z\right\vert }}{\beta +{\left\vert z\right\vert }},  \label{2.10}
\end{equation}%
and it holds for all ${z\in \mathbb{C}}$ and $\beta $ ${\in \mathbb{R}}$ such
that $\beta >{\left\vert z\right\vert }$ (see \cite{sz}). From (\ref%
{2.10}), we obtain the inequality
$$
\Re\left( \frac{z^{2}}{\zeta _{\mu ,n}^{2}-z^{2}}\right) \geq \frac{-{%
\left\vert z\right\vert }^{2}}{\zeta _{\mu ,n}^{2}+{\left\vert z\right\vert }%
^{2}},  \label{2.11}
$$
which holds for all $\mu \in \left(0,\frac{1}{2}\right),$ $n\in \mathbb{N}$
and ${\left\vert z\right\vert <}\zeta _{\mu ,1}$ and it  implies that%
\begin{equation*}
\Re\left(\frac{zf_{\mu-1}'(z)}{f_{\mu-1}(z)}\right) =1-%
\frac{1}{\mu -\frac{1}{2}}\Re\left(\sum\limits_{n\geq1}\frac{%
2z^{2}}{\zeta _{\mu ,n}^{2}-z^{2}}\right) \geq 1+\frac{1}{\mu -\frac{1}{2}}%
\sum\limits_{n\geq1}\frac{2\left\vert z\right\vert ^{2}}{\zeta _{\mu
,n}^{2}+\left\vert z\right\vert ^{2}}=\frac{i\left\vert z\right\vert f_{\mu
-1}'(i\left\vert z\right\vert )}{f_{\mu -1}(i\left\vert
z\right\vert )}.
\end{equation*}%
In this case equality is attained if $z=i\left\vert z\right\vert =ir.$ Moreover, the latter inequality implies that
\begin{equation*}
\Re\left( \frac{zf_{\mu -1}'(z)}{f_{\mu -1}(z)}\right) >\alpha
\end{equation*}
if and only if $\left\vert z\right\vert <q_{\mu ,\alpha }$, where $q_{\mu ,\alpha }$ denotes the smallest positive root of the equation $irf_{\mu
-1}'(\mathrm{i}r)/f_{\mu-1}(\mathrm{i}r)=\alpha,$ which is equivalent to
\begin{equation*}
i r s_{\mu -\frac{3}{2},\frac{1}{2}}'(ir)-\alpha \left(\mu -\frac{1}{2}\right)s_{\mu -\frac{3}{2},\frac{1}{2}}(ir)=0,\text{ for }\mu \in \left(0,\frac{1}{2}\right).
\end{equation*}%
Substituting $\mu$ by $\mu +1,$ we obtain
\begin{equation*}
i r s_{\mu -\frac{1}{2},\frac{1}{2}}'(i r)-\alpha \left(\mu +\frac{1}{2}\right)s_{\mu -\frac{1}{2},\frac{1}{2}}(ir)=0,\text{ for }\mu \in \left(-1,-\frac{1}{2}\right).
\end{equation*}
It follows from Theorem \ref{ThZ} that the first positive zero of $z\mapsto izs_{\mu -\frac{1}{2},\frac{1}{2}}'(iz)-\alpha \left(\mu +\frac{1}{2}\right)s_{\mu -\frac{1}{2},\frac{1}{2}}(iz)$ does not exceed $\zeta_{\mu,1}$ which guarantees that the above inequalities are valid. All we need to prove is that the above function has actually only one zero in $(0,\infty)$. Observe that, according to Lemma \ref{lempower}, the function
$$
r\mapsto \frac{irs_{\mu -\frac{1}{2},\frac{1}{2}}'(ir)}{s_{\mu -\frac{1}{2},\frac{1}{2}}(ir)}
$$
is increasing on $(0,\infty)$ as a quotient of two power series whose positive coefficients form the increasing ``quotient sequence'' $\left\{2k+\mu+\frac{1}{2}\right\}_{k\geq0}.$ On the other hand, the above function tends to $\mu+\frac{1}{2}$ when $r\to0,$ so that its graph can intersect the horizontal line $y=\alpha\left(\mu+\frac{1}{2}\right)>\mu+\frac{1}{2}$ only once. This completes the proof of part {\bf a} of the theorem when $\mu\in(-1,0)$.
\end{proof}

\begin{proof}[Proof of Theorem 2]
As in the proof of Theorem \ref{theo1} we need show that, for the
corresponding values of $\nu $ and $\alpha $, the inequalities
\begin{equation}
\Re\left( \frac{zu_{\nu }'(z)}{u_{\nu }(z)}\right) >\alpha ,\
\ \Re\left( \frac{zv_{\nu }'(z)}{v_{\nu }(z)}\right) >\alpha
\ \text{and\ }\Re\left( \frac{zw_{\nu }'(z)}{w_{\nu }(z)}%
\right) >\alpha \ \   \label{strv1}
\end{equation}%
are valid for $z\in \mathbb{D}_{r_{\alpha }^{\ast }(u_{\nu })}$, $z\in
\mathbb{D}_{r_{\alpha }^{\ast }(v_{\nu })}$ and $z\in \mathbb{D}_{r_{\alpha
}^{\ast }(w_{\nu })}$ respectively, and each of the above inequalities does
not hold in any larger disk.

If $\left\vert \nu \right\vert \leq \frac{1}{2},$ then (see \cite[Lemma 1]{BPS}) the Hadamard
factorization of the transcendental entire function $\mathcal{H}_{\nu }$, defined by
\begin{equation*}
\mathcal{H}_{\nu }(z)=\sqrt{\pi }2^{\nu }z^{-\nu -1}\Gamma \left( \nu +\frac{%
3}{2}\right) \mathbf{H}_{\nu }(z),
\end{equation*}
reads as follows%
\begin{equation*}
\mathcal{H}_{\nu }(z)=\prod\limits_{n\geq 1}\left( 1-\frac{z^{2}}{h_{\nu
,n}^{2}}\right) ,
\end{equation*}%
which implies that
\begin{equation*}
\mathbf{H}_{\nu }(z)=\frac{z^{\nu +1}}{\sqrt{\pi }2^{\nu }\Gamma \left( \nu +%
\frac{3}{2}\right) }\prod\limits_{n\geq 1}\left( 1-\frac{z^{2}}{h_{\nu
,n}^{2}}\right),
\end{equation*}%
where $h_{\nu ,n}$ stands for the $n$th positive zero of the Struve function
$\mathbf{H}_{\nu }.$

We know that (see \cite[Theorem 2]{Bar3}) $h_{\nu ,n}>h_{\nu ,1}>1$ for all $%
\left\vert \nu \right\vert \leq \frac{1}{2}$ and $n\in \mathbb{N}$. If $\left\vert \nu \right\vert \leq \frac{1}{2}$ and $\left\vert
z\right\vert <h_{\nu ,1}$, then (\ref{2.5}) imples%
\begin{equation*}
\Re\left(\frac{zu_{\nu }'(z)}{u_{\nu }(z)}\right) =1-\frac{1%
}{\nu +1}\Re\left(\sum\limits_{n\geq 1}\frac{2z^{2}}{h_{\nu
,n}^{2}-z^{2}}\right) \geq 1-\frac{1}{\nu +1}\sum\limits_{n\geq 1}\frac{%
2\left\vert z\right\vert ^{2}}{h_{\nu ,n}^{2}-\left\vert z\right\vert ^{2}}=%
\frac{\left\vert z\right\vert u_{\nu }'(\left\vert z\right\vert )}{%
u_{\nu }(\left\vert z\right\vert )},
\end{equation*}%
\begin{equation*}
\Re\left(\frac{zv_{\nu }'(z)}{v_{\nu }(z)}\right)=1-\Re\left(\sum\limits_{n\geq 1}\frac{2z^{2}}{h_{\nu ,n}^{2}-z^{2}}\right) \geq
1-\sum\limits_{n\geq 1}\frac{2\left\vert z\right\vert ^{2}}{h_{\nu
,n}^{2}-\left\vert z\right\vert ^{2}}=\frac{\left\vert z\right\vert v_{\nu
}'(\left\vert z\right\vert )}{v_{\nu }(\left\vert z\right\vert )}
\end{equation*}%
and%
\begin{equation*}
\Re\left(\frac{zw_{\nu }'(z)}{w_{\nu }(z)}\right)=1-\Re\left(\sum\limits_{n\geq 1}\frac{z}{h_{\nu ,n}^{2}-z}\right)\geq
1-\sum\limits_{n\geq 1}\frac{\left\vert z\right\vert }{h_{\nu
,n}^{2}-\left\vert z\right\vert }=\frac{\left\vert z\right\vert w_{\nu}'(\left\vert z\right\vert )}{w_{\nu }(\left\vert z\right\vert )},
\end{equation*}%
where equalities are attained when $z=\left\vert z\right\vert =r.$ Then minimum principle for
harmonic functions implies that the
corresponding inequalities in (\ref{strv1}) hold if and only if
$\left\vert z\right\vert <\delta _{\nu ,\alpha },$ $\left\vert z\right\vert
<\rho _{\nu ,\alpha }$ and $\left\vert z\right\vert <\sigma _{\nu ,\alpha },$
respectively, where $\delta _{\nu ,\alpha }$, $\rho _{\nu ,\alpha }$ and $%
\sigma _{\nu ,\alpha }$ are the smallest positive roots of the equations%
\begin{equation*}
ru_{\nu }'(r)/u_{\nu }(r)=\alpha ,\text{ \ }rv_{\nu }'(r)/v_{\nu }(r)=\alpha ,\ rw_{\nu }'(r)/w_{\nu }(r)=\alpha.
\end{equation*}%
The solutions of these equations are the zeros of the functions
$$r\mapsto r\mathbf{H}_{\nu }'(r)-\alpha (\nu +1)\mathbf{H}_{\nu }(r),\ r\mapsto r\mathbf{H}_{\nu }'(r)-(\alpha +\nu )\mathbf{H}_{\nu }(r),\ r\mapsto r\mathbf{H}_{\nu }'(r)-(2\alpha +\nu -1)\mathbf{H}_{\nu }(r),$$
which, in view of Theorem \ref{ThZ}, have only real zeros and the smallest positive zero of each of them does not exceed the first
positive zeros of $\mathbf{H}_{\nu }$.
\end{proof}

\end{document}